\newtheorem{theorem}{Theorem}  
\newtheorem{lemma}[theorem]{Lemma}
\newtheorem{conjecture}{Conjecture} 
\newtheorem{corollary}[theorem]{Corollary}
\theoremstyle{definition}
\newtheorem{remark}[theorem]{Remark}
\newtheorem*{remark*}{Remark}
\newcommand{\Q}{\mathbb{Q}}
\newcommand{\Crossing}{
\raisebox{-2mm}{
\begin{picture}(24,24)
\put(0,0){\line(1,1){10}}
\put(14,14){\line(1,1){10}}
\put(0,24){\line(1,-1){24}}
\end{picture} } 
}
\newcommand{\smoothA}{
\raisebox{-2mm}{
\begin{picture}(24,24)
\qbezier(0,0)(14,12)(0,24)
\qbezier(24,0)(10,12)(24,24)
\end{picture} } 
}
\newcommand{\smoothB}{
\raisebox{-2mm}{
\begin{picture}(24,24)
\qbezier(0,0)(12,14)(24,0)
\qbezier(0,24)(12,10)(24,24)
\end{picture} } 
}
\begin{document}

\title[Finiteness of purely cosmetic surgery]{A remark on a finiteness of purely cosmetic surgeries}
\author{Tetsuya Ito}
\address{Department of Mathematics, Kyoto University, Kyoto 606-8502, JAPAN}
\email{tetitoh@math.kyoto-u.ac.jp}
\subjclass[2020]{Primary~57K10, Secondary~57K30}
\keywords{Cosmetic surgery, knot floer thickness, braid index}

\begin{abstract}
By estimating the Turaev genus or the dealternation number, which leads to an estimate of knot floer thickness, in terms of the genus and the braid index, we show that a knot $K$ in $S^{3}$ does not admit purely cosmetic surgery whenever $g(K)\geq \frac{3}{2}b(K)$, where $g(K)$ and $b(K)$ denotes the genus and the braid  index, respectively. In particular, this establishes a finiteness of purely cosmetic surgeries; for fixed $b$, all but finitely many knots with braid index $b$ satisfies the cosmetic surgery conjecture. 
\end{abstract} 

\maketitle

For a knot $K$ in the 3-sphere $S^{3}$ and $r \in \Q$, let $S^{3}_K(r)$ be the $r$-surgery on $K$. Two Dehn surgeries $S^{3}_K(r)$ and $S^{3}_K(r')$ on the same knot $K$ are \emph{purely cosmetic} if $r \neq r'$ but $S^{3}_K(r) \not \cong S^{3}_K(r')$. Here we denote by $M \cong N$ if $M$ and $N$ are \emph{orientation-preservingly} homeomorphic. 

\begin{conjecture}[Cosmetic surgery conjecture]
\label{conj:PCSC}
A non-trivial knot does not admit purely cosmetic surgeries.
\end{conjecture}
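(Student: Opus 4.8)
The statement is the full cosmetic surgery conjecture, which remains open; what follows is the program I would pursue and where I expect it to stall. The plan is to compound the known surgery obstructions and then try to force a Heegaard Floer thickness estimate to cover every non-trivial knot. I would first run the classical numerical reductions: the Casson--Walker computation of Boyer--Lines forces the second Conway coefficient $a_2(K)$ to vanish for any purely cosmetic pair, while Ni--Wu's comparison of the correction terms $d$ of $S^{3}_K(p/q)$ shows the two slopes must be $\pm p/q$ with $\gcd(p,q)=1$ and constrains $p,q$ arithmetically. Hanselman's immersed-curve refinement then collapses the surviving possibilities to two families: slope $\pm 2$ with $g(K)=2$, or slope $\pm 1/q$, the latter surviving only when the thickness is large relative to the genus, i.e. under an inequality bounding $g(K)$ from above by a function of $\mathrm{th}(K)$. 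The conjecture is thereby reduced to excluding these residual families.

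For the slope $\pm 1/q$ family I would let the thickness estimate do the work, exactly as in the present paper's treatment of the range $g(K)\ge \tfrac32 b(K)$: bound $\mathrm{th}(K)$ from above by the Turaev genus or the dealternation number, and bound these in turn by $g(K)$ and $b(K)$. Whenever the resulting upper bound on $\mathrm{th}(K)$ falls below the threshold demanded by Hanselman's inequality, the knot admits no purely cosmetic surgery. The point of departure from the present paper is that to reach \emph{every} knot this comparison must close for all pairs $(g,b)$, not merely when $g$ dominates $b$; that would require a universal thickness estimate strong enough to undercut Hanselman's genus threshold with no hypothesis relating $g(K)$ and $b(K)$.

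The main obstacle is the low-complexity corner that the thickness machinery simply cannot see. The approach is intrinsically a large-genus tool: it degenerates precisely where the other obstructions are also weakest, namely for thin or nearly thin knots, where $\mathrm{th}(K)$ is small yet the $d$-invariant constraints carry no force, and for the exceptional slope-$\pm2$, genus-$2$ family left over by Hanselman, on which $a_2(K)=0$ is automatic. I do not expect any purely homological argument to dispatch these cases; closing them would demand genuinely new input, for instance the full bordered / immersed-curve invariant rather than its numerical thickness shadow, instanton- or $\mathrm{SL}_2(\C)$-Casson-type obstructions, or, for the generic hyperbolic knot, rigidity of the hyperbolic volume and Chern--Simons invariant under the two fillings. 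Consequently the honest outcome of this plan is a reduction of the full conjecture to these low-genus residual cases, together with a quantitative confirmation for knots whose genus is large relative to their braid index; the uniform elimination of the residual cases is the crux, and to my knowledge it remains open.
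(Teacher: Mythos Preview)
Your assessment is correct: the statement is a conjecture, and the paper does not prove it. There is no proof in the paper to compare your proposal against; the paper only establishes the partial result Theorem~\ref{theorem:main}, namely that the conjecture holds whenever $g(K)\geq \tfrac{3}{2}b(K)$.

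Your outline of the program is accurate and aligns with what the paper actually does for its partial result: invoke Hanselman's constraint (Theorem~\ref{theorem:hanselman}) to reduce to the two residual families, then bound $th(K)$ via the Turaev genus or dealternation number combined with the quantitative Birman--Menasco inequality, so that the $\pm 1/q$ family is excluded once $g(K)$ is large relative to $b(K)$. You are also right that this machinery is intrinsically a large-genus tool and cannot close the low-genus residual cases (in particular the $\{r,r'\}=\{\pm 2\}$, $g(K)=2$ family), and that no argument in the paper addresses those. So your proposal is not a proof of the conjecture, but it is an honest and accurate description of the state of affairs and of where the paper's method stops.
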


One must be careful that we are taking account of orientations; there are several examples of \emph{chirally cosmetic surgery}, a pair of Dehn surgeries on the same knot that yields orientation-reversingly homeomorphic 3-manifolds. For example, for the trefoil knot $K$, $S^{3}_K(9) \cong -S^{3}_K(\frac{9}{2})$ \cite{ma}. Here $-M$ is the 3-manifold $M$ with opposite orientation.

For a knot $K$ in $S^{3}$, let $g(K)$ be the genus and $b(K)$ be the braid index of $K$. The aim of this note is to point out the following finiteness result on purely cosmetic surgeries that gives a strong supporting evidence for Conjecture \ref{conj:PCSC}.
  
\begin{theorem}
\label{theorem:main}
If $g(K) \geq \frac{3}{2}b(K)$ then $K$ does not admit a purely cosmetic surgery.
In particular, for given $b>0$, these is only finitely many knots with braid index $b$ that admit purely cosmetic surgeries.
\end{theorem}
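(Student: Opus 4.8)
The plan is to contradict the existence of a purely cosmetic surgery under the hypothesis $g(K)\ge \frac{3}{2}b(K)$ by confronting Hanselman's Heegaard--Floer obstruction with an upper bound on the knot Floer thickness that is forced by the braid index. First I would invoke Hanselman's theorem: if $K$ admits a purely cosmetic surgery, then either $g(K)=2$ and the two surgery slopes are $\pm 2$, or the slopes are $\pm\frac{1}{q}$ for a positive integer $q$ with
$$q\le \frac{\mathrm{th}(K)+2g(K)}{2g(K)\bigl(2g(K)-1\bigr)},$$
where $\mathrm{th}(K)$ is the knot Floer thickness (the $\delta$-width of $\widehat{HFK}(K)$). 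Since $K$ is non-trivial we have $b(K)\ge 2$, so $g(K)\ge 3$ and the first alternative is excluded; and $q\ge 1$ in the second forces $\mathrm{th}(K)\ge 2g(K)\bigl(2g(K)-2\bigr)$, in particular $\mathrm{th}(K)\ge 2g(K)-2$. It therefore suffices to bound $\mathrm{th}(K)$ from above in terms of $g(K)$ and $b(K)$ in a way that is violated once $g(K)\ge\frac{3}{2}b(K)$.

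For this I would use the standard comparisons $\mathrm{th}(K)\le g_T(K)\le\mathrm{dalt}(K)$ relating the thickness to the Turaev genus $g_T$ (Champanerkar--Kofman--Stoltzfus) and the dealternation number $\mathrm{dalt}$, reducing to an upper bound for $g_T(K)$ --- or $\mathrm{dalt}(K)$ --- in terms of $g(K)$ and $b(K)$. To obtain one, take a braid representative $\beta$ of $K$ on $b=b(K)$ strands and analyze the closed-braid diagram $D=\widehat{\beta}$: Seifert's algorithm on $D$ yields a surface $F$ with exactly $b$ disks, so $c(D)=b-1+2g(F)$ with $g(F)\ge g(K)$, and the all-same Kauffman states $s_A(D),s_B(D)$ can be estimated from the $b$ Seifert circles together with the combinatorics of the braid word, once the writhe --- hence the number of crossings of each sign --- of a carefully chosen $\beta$ is bounded by a Bennequin-type inequality. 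Substituting into $g_T(D)=\frac{1}{2}\bigl(2+c(D)-s_A(D)-s_B(D)\bigr)$ and using $g_T(K)\le g_T(D)$ gives the estimate; equivalently one can argue directly with $\mathrm{dalt}(K)$, making $D$ alternating by crossing changes confined to a small number of columns. This second step is the crux, and calibrating its constants is exactly what produces the coefficient $\frac{3}{2}$: one wants the resulting bound on $g_T(K)$ to fall below $2g(K)-2$ precisely when $g(K)\ge\frac{3}{2}b(K)$ (a bound of the shape $g_T(K)\le 3\bigl(b(K)-1\bigr)$, for instance, would suffice). The difficulty is that a priori the crossing number of a minimal-strand braid is not controlled by $g(K)$ and $b(K)$, so the representative $\beta$ and the diagram $D$ must be chosen with care in order to keep $s_B(D)$ --- equivalently the number of crossing changes needed to alternate $D$ --- under control.

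Combining the two halves, a purely cosmetic surgery would force $2g(K)-2\le\mathrm{th}(K)\le g_T(K)$ to obey a bound in $g(K)$ and $b(K)$ that fails as soon as $g(K)\ge\frac{3}{2}b(K)$, a contradiction; hence $K$ admits no purely cosmetic surgery. The finiteness statement then follows formally: for each fixed $b>0$, any knot of braid index $b$ admitting a purely cosmetic surgery must satisfy $g(K)<\frac{3}{2}b$, and there are only finitely many knots of bounded braid index and bounded genus.
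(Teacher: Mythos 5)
Your first half is sound and matches the paper: Hanselman's obstruction rules out the $g(K)=2$ branch (since $g(K)\ge\frac{3}{2}b(K)\ge 3$) and converts $q\ge 1$ into a lower bound on $\mathrm{th}(K)$, so everything reduces to an upper bound on $\mathrm{th}(K)\le g_T(K)\le \mathrm{dalt}(K)$ in terms of $g(K)$ and $b(K)$. But the step you yourself flag as ``the crux'' is a genuine gap, and the sketch you give for it would not close. A Bennequin-type inequality controls the \emph{writhe} (the algebraic crossing number) of a braid representative by the Euler characteristic; it does not control the geometric crossing number $c(D)$, and a minimal-strand braid representative of a knot of fixed genus and braid index can a priori have arbitrarily many crossings (long subwords like $\sigma_i^{k}\sigma_i^{-k}$ change neither $b$ nor $g$). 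So Seifert's algorithm on $\widehat{\beta}$ plus an estimate of $|s_A|,|s_B|$ from the braid word cannot by itself yield a bound on $g_T(D)=\frac{1}{2}(c(D)+2-|s_A|-|s_B|)$ in terms of $g(K)$ and $b(K)$. The paper's essential input here is the quantitative Birman--Menasco theorem of \cite{it}, $c(K)\le(2b(K)-5)(2g(K)-1+b(K))$ for $b(K)\ge 4$ (proved via braid foliations on a minimal-genus Seifert surface), combined with the trivial bound $g_T(K)\le\frac{1}{2}c(K)$ and Lowrance's $\mathrm{th}(K)\le g_T(K)$; alternatively Theorem \ref{theorem:dalt} bounds $\mathrm{dalt}(K)$ directly by the same foliation machinery. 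Without that (or an equivalent) result, the upper bound does not exist in your argument.

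There is a second, quantitative problem: you weaken Hanselman's conclusion to the linear bound $\mathrm{th}(K)\ge 2g(K)-2$ and then hope for an upper bound of the shape $g_T(K)\le 3(b(K)-1)$, i.e.\ independent of $g(K)$. No such $g$-independent bound is available (and none is claimed in the paper); the bounds one can actually prove, such as $\mathrm{th}(K)\le\frac{1}{2}(2b(K)-5)(2g(K)-1+b(K))$, grow linearly in $g(K)$ with slope $2b(K)-5\ge 3$, so they never contradict $\mathrm{th}(K)\ge 2g(K)-2$ for large $g$. The paper instead retains the full quadratic lower bound $\mathrm{th}(K)\ge 2g(K)(g(K)-2)$ coming from Hanselman's theorem (note also that the correct denominator there is $2g(K)(g(K)-1)$, not $2g(K)(2g(K)-1)$ as you wrote), and plays quadratic growth in $g$ against linear growth in $g$; this is exactly what produces the threshold $g(K)\ge\frac{3}{2}b(K)$ via the discriminant of the resulting quadratic inequality \eqref{eqn:c}. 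You should also note that the crossing-number input is only valid for $b(K)\ge 4$, so the cases $b(K)=2,3$ need the separate treatment the paper gives (alternating/3-braid results). Your deduction of the finiteness statement from Birman--Menasco is correct.
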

Here, the latter fininteness assertion follows from Birman-Menasco's finiteness theorem \cite{bm}: for given $g,b>0$ there are only finitely many knots with genus $g$ and braid index $b$.

Our proof of Theorem \ref{theorem:main} is based on its quantitative refinement of Birman-Menasco's finiteness theorem \cite{it} and the following quite strong constraint for purely cosmetic surgeries.

\begin{theorem}[Hanselman \cite{ha}]
\label{theorem:hanselman}
Let $K$ be a non-trivial knot and $th(K)$ be the Heegaard Floer thinkness of $K$. If  $S^{3}_K(r) \cong S^{3}_K(r')$ for $r\neq r'$, then either
\begin{itemize}
\item $\{r,r'\}=\{2,-2\}$ and $g(K)=2$, or,
\item $\{r,r'\}=\{\frac{1}{q},-\frac{1}{q}\}$ for some $0<q \leq \frac{th(K)+2g(K)}{2g(K)(g(K)-1)}$.
\end{itemize}
\end{theorem}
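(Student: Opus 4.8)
The plan is to upgrade the classical obstruction theory for purely cosmetic surgeries into a quantitative genus-and-thickness bound by computing Heegaard Floer homology through the immersed-curve model of the knot complement. First I would invoke the standard reductions. Combining the Casson--Walker invariant obstruction of Boyer--Lines with the $d$-invariant and linking-form computations of Ozsv\'ath--Szab\'o and Ni--Wu, any purely cosmetic pair must have opposite slopes, $r'=-r$, with $r=p/q$ in lowest terms satisfying $q^{2}\equiv -1\pmod p$. Since $1/q$-surgery yields an integer homology sphere and $2$-surgery a lens-space-type rational homology sphere, these invariants are read off from the graded group $\widehat{HF}$ together with its correction terms. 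Thus the hypothesis $S^{3}_K(r)\cong S^{3}_K(-r)$ becomes a precise identification of the $\mathrm{spin}^c$-graded Heegaard Floer homology and $d$-invariants of the two surgeries.

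Next I would encode $K$ by its immersed multicurve $\gamma(K)$ in the punctured torus $\partial\nu(K)\smallsetminus\{z\}$ (Hanselman--Rasmussen--Watson), lifted to the cylinder $\R\times S^{1}$ with marked points at integer heights. Its graded pairing with a line $\ell_{p/q}$ of slope $p/q$ computes $\widehat{HF}(S^{3}_K(p/q))$ as a minimal intersection number. The two geometric features I would isolate are: (i) the distinguished component wraps to maximal height $g(K)$, reflecting that $\widehat{HFK}$ is supported in Alexander gradings $[-g(K),g(K)]$; and (ii) the remaining figure-eight components have total complexity controlled by the thickness $th(K)$. Because knot Floer homology is symmetric, $\gamma(K)$ is invariant under the elliptic involution, and orientation reversal $r\mapsto -r$ corresponds to reflecting the slope of the pairing line.

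The heart of the argument is a graded intersection count. Lifting $\gamma(K)$ together with $\ell_{p/q}$ and $\ell_{-p/q}$, I would compare their contributions to the correction terms $\mathrm{spin}^c$-structure by $\mathrm{spin}^c$-structure. Intersections split into those from the vertical strands of the main component, which occur at heights between $-g(K)$ and $g(K)$ and which a shallow line of slope $1/q$ meets a number of times growing linearly in $q$, and those from the extra components. The cosmetic identification forces these families to balance across $\pm$ slopes; counting the interactions among the $\sim 2g(K)$ strands occupying heights up to $g(K)$ produces the quadratic factor $2g(K)(g(K)-1)$, while the extra components replace the numerator by $th(K)+2g(K)$. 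Balancing is possible only for $q\le \frac{th(K)+2g(K)}{2g(K)(g(K)-1)}$ in the $p=1$ case, whereas the degenerate configuration in which the main component alone sustains the identification pins down $p=2$ together with $g(K)=2$.

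I expect the main obstacle to be the third step: making the graded bookkeeping for the immersed-curve pairing precise enough that reflecting the line's slope yields a sharp numerical inequality rather than a merely qualitative obstruction. One must control the Maslov gradings of all intersection points simultaneously, not just their count, and verify that the figure-eight components cannot conspire to cancel the discrepancy forced by the main component unless the stated bound on $q$ holds. Once the relevant height/generating-function identity across $\pm$ slopes is established, separating the two cases should reduce to a finite geometric case-check.
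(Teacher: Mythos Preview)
The paper does not prove this statement at all: Theorem~\ref{theorem:hanselman} is quoted verbatim from Hanselman's work \cite{ha} and used as a black box. There is no proof in the paper to compare against; the author simply invokes the result and then combines it with Lemma~\ref{lemma:thickness} in the proof of Theorem~\ref{theorem:main}.

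Your proposal is a plausible high-level sketch of Hanselman's own argument (Boyer--Lines and Ni--Wu reductions to $r'=-r$, the immersed-curve model for the knot complement, and a graded intersection count with lines of slope $\pm p/q$), and as such it is not unreasonable as an outline of what happens in \cite{ha}. But it is out of scope here: for the purposes of reproducing \emph{this} paper you should simply state the theorem, attribute it to Hanselman, and move on. If you want to include a proof, you would be reproving \cite{ha}, not the present note, and your sketch would need substantial fleshing out---in particular the ``graded bookkeeping'' you flag as the main obstacle is indeed the entire technical content of Hanselman's paper, and your outline does not yet contain the key combinatorial lemma that converts the symmetry constraint into the precise bound $q\le \frac{th(K)+2g(K)}{2g(K)(g(K)-1)}$.
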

Here $th(K)$ is the thickness of the knot floer homology.

Thus when $g(K)\neq 2$ and $th(K)$ is small compared with $g(K)$ then $K$ does not admit purely cosmetic surgery. This motivates to study a relation between $g(K)$ and $th(K)$, in particular the (upper) bound of $\frac{th(K)}{g(K)}$. Here we give an upper bound of the thinkness $th(K)$ in terms of $g(K)$ and $b(K)$. 

Although our argument is applied for the case $b(K)=2,3$, we restrict our attention to the case $b(K) \geq 4$.

\begin{lemma}
\label{lemma:thickness}
If $b(K)\geq 4$,
\[ th(K) \leq \frac{1}{2}(2b(K)-5)(2g(K)-1+b(K))\]
\end{lemma}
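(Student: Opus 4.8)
The plan is to bound the knot Floer thickness by a diagrammatic quantity and then to estimate that quantity on a well-chosen braid diagram. Write $g=g(K)$ and $b=b(K)$. I would use Lowrance's inequality $th(K)\le g_T(K)$, where $g_T$ denotes the Turaev genus (equivalently one may argue throughout with the dealternation number $dalt(K)$, via $th(K)\le dalt(K)$ --- these are the two variants alluded to in the abstract). Since $g_T(K)\le g_T(D)$ for every connected diagram $D$ of $K$, it suffices to produce one diagram of $K$ whose Turaev genus is at most $\tfrac12(2b-5)(2g-1+b)$, and the obvious candidate is a closed braid diagram $\widehat\beta$ on $b$ strands.

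First I would fix a braid representative $\beta$ of $K$ on $b$ strands and put it into the normal form coming from the quantitative refinement of the Birman--Menasco finiteness theorem in \cite{it}; this bounds the word length $c=c(\widehat\beta)$ (the crossing number of the diagram) by an explicit function of $g$ and $b$, and controls the combinatorics of the word. Writing $c=c_++c_-$ for the numbers of positive and negative crossings, the Bennequin inequality applied to $\beta$ and to its mirror gives the further constraint $|c_+-c_-|=|w(\beta)|\le 2g-1+b$, which I expect to help balance the two extreme Kauffman states.

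Next I would estimate $g_T(\widehat\beta)=\tfrac12\bigl(c-s_A(\widehat\beta)-s_B(\widehat\beta)+2\bigr)$, where $s_A$ and $s_B$ count the loops of the all-$A$ and all-$B$ Kauffman states. The structural point is that for a closed braid these two states are Temperley--Lieb closures: starting from the oriented (Seifert) state, which is $b$ disjoint circles, the all-$A$ state is obtained by changing the resolution at each negative crossing and the all-$B$ state by changing it at each positive crossing, and each such change is a single saddle, hence alters the loop count by exactly $\pm1$. This yields $s_A\ge b-c_-$ and $s_B\ge b-c_+$, and for braid words in the normal form of \cite{it} one can sharpen this --- in the positive or negative cases one already gets the cleaner bound $g_T(\widehat\beta)\le\tfrac12(c-b+1)$, the genus of the Bennequin surface the diagram carries. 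Feeding the crossing bound from \cite{it} into the Turaev-genus formula, and using $b\ge4$ to absorb the lower-order correction $-b+1$ (which is exactly why the coefficient is $2b-5=2(b-2)-1$ and nothing smaller), produces the asserted inequality.

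The step I expect to be the main obstacle is making this middle estimate sharp. The crude bound $s_A+s_B\ge2b-c$ only gives $g_T(\widehat\beta)\le c-b+1$, which, combined with a crude bound on $c$, is off by a bounded factor. To reach the stated constant one must analyze the extreme states of the specific normal form from \cite{it} carefully, showing that apart from a controlled number of ``non-alternating interfaces'' between consecutive braid columns the all-$A$ and all-$B$ states behave as for an alternating diagram; equivalently, one bounds $dalt(\widehat\beta)$ directly by counting how many crossings of that normal form must be switched to reach an alternating closed-braid diagram. That bookkeeping --- tying the normal form of \cite{it} to the extreme Kauffman states, or to the dealternation count --- is the technical heart of the argument; the rest is the Turaev-genus formula and elementary arithmetic with $b\ge4$.
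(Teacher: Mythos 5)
Your overall skeleton --- Lowrance's inequality $th(K)\le g_T(K)$, the Turaev genus formula for a diagram, and the quantitative Birman--Menasco theorem of \cite{it} --- matches the paper's, but your proof does not close, and the place where it stalls rests on a misreading of where the work lies. You never state the crossing-number bound from \cite{it} precisely, and you then convince yourself that a delicate analysis of the extreme Kauffman states of a braid normal form (or a dealternation count) is ``the technical heart'' still to be carried out. It is not. The paper uses only the completely trivial estimate $|s_A|,|s_B|\ge 1$, which gives $g_T(D)\le \frac{1}{2}(c(D)+2-1-1)=\frac{1}{2}c(D)$ for \emph{every} diagram $D$, hence $g_T(K)\le\frac{1}{2}c(K)$; and the quantitative Birman--Menasco theorem says exactly that $c(K)\le (2b(K)-5)(2g(K)-1+b(K))$ when $b(K)\ge 4$. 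Multiplying by $\frac{1}{2}$ gives the lemma verbatim. In particular the coefficient $2b-5$ is not produced by ``absorbing the correction $-b+1$''; it is the coefficient already present in the crossing-number bound, and the $\frac{1}{2}$ is just the trivial Kauffman-state bound. No normal form, no choice of a particular braid diagram, and no writhe or Bennequin input is needed.

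Your attempted sharpening is also quantitatively backwards: the estimate $s_A+s_B\ge 2b-c$ is weaker than $s_A+s_B\ge 2$ as soon as $c>2b-2$ (the typical regime here), so the bound $g_T(\widehat\beta)\le c-b+1$ you extract from it is \emph{worse} than $\frac{1}{2}c$ exactly where it matters. A genuinely finer braid-word analysis --- counting, band generator by band generator, how many crossing changes are needed to reach an alternating braid word --- does appear in the paper, but only afterwards (Theorem \ref{theorem:dalt}), as an improvement of the coefficient from $\frac{1}{2}(2b-5)=b-\frac{5}{2}$ to $b-3+\frac{1}{b}$; it is not needed for the present lemma. As written, your argument establishes only $th(K)\le c(K)-b(K)+1$ modulo an unspecified crossing bound, and explicitly defers the remaining bookkeeping, so the proof is incomplete.
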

\begin{proof}
For a knot diagram $D$, the \emph{Turaev genus} $g_T(D)$ is defined by
\[ g_T(D)=\frac{1}{2}(c(D)+2 -|s_A|-|s_B|) \]
where $c(D)$ is the crossing number of $D$, and $|s_A|$ and $|s_B|$ the number of circles obtained by $A$- (resp. $B$-) smoothing of crossings of $D$ given by
\[ \smoothA \stackrel{A}{\longleftarrow}\Crossing  \stackrel{B}{\longrightarrow} \smoothB. \]
The Turaev genus $g_T(K)$ of a knot $K$ is the minimum of $g_T(D)$ among the all diagrams $D$ of $K$.
In \cite{lo} Lowrance showed the inequality
\[ th(K) \leq g_T(K). \]
Dor any diagram $D$, $|s_A|,|s_B|\geq 1$ so $g_T(D) \leq \frac{1}{2}c(D)$. Hence we have a canonical upper bound of the Turaev genus
\begin{equation}
\label{eqn:Turaev-crossing}
g_T(K) \leq \frac{1}{2}c(K)
\end{equation}
Finally, by the quantitative Birman-Menasco finiteness Theorem\footnote{When $b(K)=2,3$ a similar inequality holds but the coefficient $(2b(K)-5)$ is $1$ and $\frac{5}{3}$, respectively.} \cite{it}, if $b(K)\geq 4$ we get
\[ c(K) \leq (2b(K)-5)(2g(K)-1+b(K)). \]
These three inequalities prove the desired inequality.
\end{proof}

\begin{proof}[Proof of Theorem \ref{theorem:main}]
In the following we assume that $b(K)\geq 4$ since Varvarezos proved the cosmetic surgery conjecture for the case $b(K)=3$ \cite{va}.
Also, we assume that $g(K)\neq 2$.

Assume, to the contrary that $K$ admits a purely cosmetic surgery. 
By Theorem \ref{theorem:hanselman}, such a knot must satisfy 
\[  1 \leq \frac{th(K)+2g(K)}{2g(K)(g(K)-1)} \iff 2g(K)(g(K)-2) \leq th(K)\]
so by Lemma \ref{lemma:thickness} we conclude when a knot $K$ admits a purely cosmetic surgery then it satisfies the inequality
\begin{align*}
& 2g(K)(g(K)-2) \leq \frac{2b(K)-5}{2}(2g(K)-1+b(K)) \\
\end{align*}
That is, we get a constraint for a knot $K$ to admit a purely comsetic surgery
\begin{equation}
\label{eqn:c}
4g(K)^2+(2-4b(K))g(K) +(2b(K)-5)(1-b(K)) \leq 0.
\end{equation}
Now the assertion of the theorem follows from an easy computation that $g(K)\geq \frac{3}{2}b(K)$ then the equation (\ref{eqn:c}) is never satisfied.
\end{proof}

As the proof indicates, our sufficient condition $g(K) \geq \frac{3}{2}b(K)$ can be improved if one can improve an estimate of $th(K)$ in Lemma \ref{lemma:thickness}.

\begin{remark}
Instead of using an obvious bound (\ref{eqn:Turaev-crossing}) of the Turaev genus, by using a different upper bound \cite[Corollary 7.3]{d+}
\[ g_T(K) \leq c(K)-\textrm{span}\,V_K(t)\]
where $V_K(t)$ denotes the Jones polynomial, we get a different constraint.
 if $K$ admits a purely cosmetic surgery, then
\begin{equation}
\label{eqn:c2}
2g(K)^2 + (6-4b(K))g(K)+(2b(K)-5)(1-b(K))+\textrm{span}\,V_K(t) \leq 0.
\end{equation}
\end{remark}

Here we give a mild improvement of Lemma \ref{lemma:thickness}.
For a diagram $D$ of a knot $K$, the \emph{dealternation number} $dalt(D)$ of $D$ is the minimum number of crossing change needed to make $D$ into an alternating diagram. The \emph{dealternation number} of a knot $K$ is the minimum of $dalt(D)$ among the all diagrams $D$ of $K$. 
It is known that $g_T(K)\leq dalt(K)$ \cite{ak}, so evaluating the dealternation number also gives an upper bound of the thickness.

We prove the following estimate of the dealternation number (hence the Turaev genus and the thickness) in terms of genus and braid index, which is interesting in its own right.

\begin{theorem}
\label{theorem:dalt}
If $b(K)\geq 4$ then 
\[ th(K) \leq g_T(K) \leq dalt(K) \leq (b(K)-3+\frac{1}{b(K)})(2g(K)-1+b(K)).\]
\end{theorem}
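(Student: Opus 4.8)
The plan is to start from a braid presentation realizing the braid index $b = b(K)$ and bound the number of crossing changes needed to reach an alternating diagram. Recall that from a braid word $\beta$ on $b$ strands whose closure is $K$, the Seifert algorithm on the standard braid diagram gives a Seifert surface with $b$ disks and $c(\beta)$ bands, so its Euler characteristic is $b - c(\beta)$; since this surface has genus $\geq g(K)$ one gets the classical bound $c(\beta) \geq 2g(K) - 1 + b$ for a minimal-genus fibered-like situation, but more usefully, by the quantitative Birman–Menasco theorem \cite{it} cited above we may take a diagram $D$ (not necessarily the braid closure) with $c(D) \leq (2b-5)(2g(K)-1+b)$. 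However, to get the sharper coefficient $b-3+\frac1b$ I would instead work directly with the braid and count dealternating crossing changes per ``block'': a positive braid word in $\sigma_1,\dots,\sigma_{b-1}$, when closed up, fails to be alternating only because of the sign pattern of adjacent generators, and one can alternate a braid on $b$ strands by changing roughly a $\frac{b-3}{b-1}$-fraction (or similar) of its crossings, crossing-syllable by crossing-syllable.

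The key steps, in order, would be: (i) fix a braid word $\beta$ on $b$ strands with $\widehat\beta = K$ and with $c(\beta) = c$ crossings, chosen so that $c \leq (2g(K)-1+b)\cdot(\text{something close to }b)$ via \cite{it}; (ii) partition the $c$ crossings of the braid diagram into groups according to which pair of adjacent strands $(i,i+1)$ they involve, and within each ``column'' $i$ observe that a vertical stack of $\sigma_i$'s and $\sigma_i^{-1}$'s can be made locally alternating by changing at most half of its crossings; (iii) more cleverly, use a checkerboard/Turaev-surface argument: the Turaev surface of the braid-closure diagram is built from $|s_A|$ and $|s_B|$ circles, and for a braid on $b$ strands the all-$A$ and all-$B$ states each have at least $1$ circle but one can do substantially better using the braid structure, giving $g_T(D) \leq \frac12(c + 2 - |s_A| - |s_B|)$ with a good lower bound on $|s_A| + |s_B|$ in terms of $b$; (iv) combine with $c \leq (2b-5)(2g(K)-1+b)$ or a direct braid crossing bound, and simplify the arithmetic to land on the coefficient $b - 3 + \frac1b = \frac{(b-1)(b-2)-1+\cdots}{b}$ — here I would reverse-engineer which crossing count and which fraction multiply to $(b-3+\frac1b)(2g-1+b)$, namely roughly $\frac{b-2}{b-1}\cdot c$ with $c \le (b-1)(2g-1+b)$.

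The main obstacle I anticipate is step (ii)/(iii): turning a braid word into an alternating \emph{diagram} is not the same as locally alternating each column, because alternating-ness is a global condition on the diagram's checkerboard coloring, and crossing changes in one column interact with the strand permutation and hence with other columns. The honest route is probably to bound $dalt(D)$ by the number of ``bad'' crossings of a specific diagram — e.g. take the braid, and note that a braid diagram is alternating iff consecutive generators along each strand alternate in over/under, which for the standard projection means the exponents must alternate in sign in a controlled way; the number of crossing changes to enforce this is at most $c$ minus the length of the longest alternating-compatible sub-selection, and one must show this is $\leq (b-3+\frac1b)(2g-1+b)$. I would expect the cleanest argument to go through the Turaev genus inequality $g_T \le dalt$ is the wrong direction — rather one needs an \emph{upper} bound on $dalt$, so I would instead exhibit an explicit sequence of crossing changes (perhaps: for each of the $b-1$ columns, alternate it, costing about half its crossings, then fix the $O(b)$ interfaces), and verify by a careful but elementary count that the total is $(b-3+\frac1b)(2g(K)-1+b)$; pinning down that this exact coefficient drops out, rather than something slightly larger, will be the delicate part.
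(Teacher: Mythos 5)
Your proposal does not reach a proof, and the missing ingredient is precisely the mechanism the paper uses: the count is not over crossings of a braid word in the Artin generators but over \emph{band generators}. The paper writes $K$ as the closure of an $n$-braid $\beta$ ($n=b(K)$) expressed as a product of band generators $a_{i,j}=(\sigma_i\cdots\sigma_{j-1})\sigma_j(\sigma_i\cdots\sigma_{j-1})^{-1}$, so that $\beta$ carries a Bennequin surface built from $n$ disks and one band per generator; when that surface realizes $g(K)$, the Euler characteristic count gives \emph{exactly} $2g(K)-1+n$ band generators. Each $a_{i,j}^{\pm1}$ with $(i,j)\neq(1,n)$ can be rewritten (using $\sigma_{j-1}\sigma_j^{\pm1}\sigma_{j-1}^{-1}=\sigma_j^{-1}\sigma_{j-1}^{\pm1}\sigma_j$) and then made ``alternating'' --- where an alternating braid means a word in $\sigma_1,\sigma_2^{-1},\sigma_3,\sigma_4^{-1},\dots$, whose closure is automatically an alternating diagram --- by at most $n-3$ crossing changes, and $a_{1,n}^{\pm1}$ by $n-2$; conjugating by $\delta=\sigma_1\cdots\sigma_{n-1}$ (which cyclically permutes the $a_{i,j}$) lets one assume the exceptional generator occurs at most $\frac1n(2g-1+n)$ times, which is where the $+\frac1n$ in the coefficient comes from. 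This sign-by-parity-of-index criterion for alternating closed braids is what dissolves the global-interaction obstacle you correctly worry about in steps (ii)--(iii): dealternation becomes a purely local, per-band count. Your alternative of taking a fraction of the total crossing number cannot work arithmetically: even an optimistic factor of $\tfrac12$ applied to $c(K)\leq(2b-5)(2g-1+b)$ gives coefficient $b-\tfrac52$, and your guess $\tfrac{b-2}{b-1}\cdot(b-1)(2g-1+b)$ gives $b-2$, both strictly worse than $b-3+\tfrac1b$.

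A second gap is that not every knot bounds a minimal-genus Bennequin surface at its minimal braid index, so the count ``number of bands $=2g-1+n$'' is not available in general. The paper handles this with a separate case using the braid foliation machinery behind the quantitative Birman--Menasco theorem: a minimal-genus surface admits a foliation whose aa-tiles and ab-tiles produce an explicit braid word made of band generators $a_{i,j}^{\pm1}$ (with $(i,j)\neq(1,n)$) and words $\gamma_{i,j}^{\pm1}$ with $|i-j|\leq n-3$, together with the tile-count inequality $2R_{aa}+R_{ab}\leq 2(2g(K)-1+b(K))$; dealternating tile by tile then yields the even better bound $(n-3)(2g-1+n)$ in this case. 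Your proposal does not anticipate this dichotomy, and without it the argument only covers knots with minimal-genus Bennequin surfaces. In short: the correct units of account (band generators, or foliation tiles, numbering $O(2g-1+b)$ rather than $O(b\cdot(2g-1+b))$), the parity-of-index notion of alternating braid, the $\delta$-conjugation trick, and the non-Bennequin case are all essential and all absent.
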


\begin{proof}
Let $n=b(K)$ and and $B_n$ be the braid group of $n$-strands. We denote the standard generators of $B_n$ by $\sigma_1,\ldots,\sigma_{n-1}$.
We say that a braid is \emph{alternating} if it is a product of $\{\sigma_1,\sigma_{2}^{-1}, \sigma_{3},\sigma_{4}^{-1},\ldots ,\sigma_{2i-1}, \sigma_{2i}^{-1},\ldots\}$. Obviously, the closure of an alternating braid is an alternating diagram.

 For $1\leq  i < j \leq n$, let $a_{i,j}$ be the band generator given by
\[ a_{i,j}=(\sigma_{i}\sigma_{i+1}\cdots \sigma_{j-1})\sigma_j(\sigma_{i}\sigma_{i+1}\cdots \sigma_{j-1})^{-1}\]
A band generator $a_{i,j}$ can be seen as the boundary a twisted band connecting the $i$-th and $j$-th strand of the braid.
Thus when $K$ is represented as the closure of a braid $\beta \in B_n$, by giving $\beta$ as a product of band generators, we get a Seifert surface $F_{\beta}$ of $K$ called the  \emph{Bennequin surface} associated to the braid (word) $\beta$.

First we treat the case $K$ bounds a minimum genus Bennequin surface of minimum braid index, that is, $K$ is represented by a closed $n$-braid $\beta$ such that its Bennequin surface $F_{\beta}$ is the minimum genus Seifert surface of $K$.

Thanks to the relation 
\[ \sigma_{j-1}\sigma_{j}^{\pm 1} \sigma_{j-1}^{-1} =\sigma_{j}^{-1}\sigma_{j-1}^{\pm 1} \sigma_{j}\]
by taking suitable word representative of $a_{i,j}^{\pm 1}$, each band generator $a_{i,j}$ except $a_{1,n}$ can be made so that it is alternating by changing at most $n-3$ crossings. The exceptional case $a_{1,n}^{\pm 1}$ can be made so that it is alternating by changing $n-2$ crossings. 
Thus 
\[ dalt(K) \leq \sum_{\substack{1\leq i<j \leq n \\ (i,j) \neq(1,n)}}(n-3)r_{i,j}  +  (n-2) r_{1,n} = \sum_{1\leq i<j \leq n }(n-3)r_{i,j}  + r_{1,n}\]
where $r_{i,j}$ is the number of $a_{i,j}^{\pm 1}$ in the braid $\beta$.

On the other hand, since we assume that the Bennequin surface $F_{\beta}$ associated with the $n$-braid $\beta$ has genus $g(K)$, 
\[ \sum_{1\leq i<j \leq n}r_{i,j} = 2g(K)-1+n. \]

Let $\delta=\sigma_{1}\sigma_2\sigma_{3}\cdots \sigma_{n-1}$. Since $\delta a_{i,j} \delta^{-1} = a_{i+1,j+1}$ (here we regard indices modulo $n$; for example, $\delta a_{1,n} \delta^{-1}=a_{2,n+1}$ is understood as $a_{1,2}$), by taking conjugates of $\delta$ if necessary, we may assume that $r_{1,n} \leq \frac{1}{n}(r_{1,2}+r_{2,3}+r_{3,4}+\cdots + r_{n-1,n}+r_{1,n}) \leq \frac{1}{n}(2g(K)-1+n)$.

Thus we conclude
\[ dalt(K) \leq \sum_{1\leq i<j \leq n }(n-3)r_{i,j}  + r_{1,n} \leq (n-3+\frac{1}{n})(2g(K)-1+n)\]
as desired.

Next we assume that $K$ does not bound a minimum genus Bennequin surface of the minimum braid index. To treat this case we quickly review a main strategy of the proof of quantitative Birman-Menasco theorem \cite{it}, how to relate the genus, braid index and crossing number (although we do not need to use or know the details).

We put a minimum genus Seifert surface $F$ of $K$ so that it admits a braid foliation. Let $R_{aa}, R_{ab}$ be the number of aa-tiles and ab-tiles of the braid foliation. What we showed in \cite{it} is two inequalities 
\begin{equation}
\label{eqn:cross} c(K) \leq (2n-5) R_{aa} + (n-3)R_{bb} 
\end{equation}
and
\begin{equation}
\label{eqn:genus}
2R_{aa}+R_{ab} \leq 2(2g(K)-1+b(K)).
\end{equation}
More precisely, the inequality (\ref{eqn:cross}) is obtained by observing that the braid foliation gives rise to an explicit closed $n$-braid representative $\beta$, such that one aa-tile provides a braid which is a band generator
\[ a_{i,j}^{\pm 1}, \qquad (i,j) \neq (1,n)\]
and that one ab-tile provides a braid of the form
\[ \gamma_{i,j}^{\pm 1}, \qquad |i-j| \leq n-3.\]
Here $\gamma_{i,j}$ denotes the braid
\[ \gamma_{i,j}=
\begin{cases}
\sigma_{i}\sigma_{i+1}\cdots \sigma_{j} & (i\leq j)\\
\sigma_{i}\sigma_{i-1}\cdots \sigma_{j} & (i\geq j).
\end{cases} \]

If $n$ is odd, then each braid $\gamma_{i,j}$ can be made into alternating by at most $\frac{1}{2}(n-3)$ crossing changes. Each band generator $a_{i,j}$ coming from aa-tile can be made into alternating by at most $(n-3)$ changes since $a_{1,n}$ does not appear. Therefore we conclude
\begin{align*}
dalt(K) & \leq (n-3)R_{aa} + \frac{1}{2}(n-3)R_{ab} = \frac{1}{2}(n-3)(2R_{aa}+R_{ab})\\
& \leq  (n-3)(2g(K)-1+n).
\end{align*}

If $n$ is even, let $M$ be the number $\gamma_{i,j}$ produced by ab-tiles such that 
$\gamma_{i,j}$ is made into alternating by $\frac{1}{2}(n-2)$ crossing changes. 
By taking the mirror image of $\beta$ if necessary, we may assume that $M \leq \frac{1}{2}R_{ab}$.
Since other braids $\gamma_{i,j}$ from ab-tiles can be made into alternating by at most $\frac{1}{2}(n-2)$ crossing changes, 
\begin{align*}
dalt(K)& \leq (n-3)R_{aa} + \frac{1}{2}(n-4)(R_{ab}-M) + \frac{1}{2}(n-2) M\\
& \leq (n-3)R_{aa} + \frac{1}{2}(n-3)R_{ab} = \frac{1}{2}(n-3)(2R_{aa}+R_{ab})\\
& \leq (n-3)(2g(K)-1+n).
\end{align*}
\end{proof}

Using this refinement we can improve a sufficient condition in Theorem \ref{theorem:main}. For example, for the case $b(K)=4$, a direct computation shows that 
\begin{corollary}
A knot $K$ with braid index $4$ does not admit purely cosmetic surgery if $g(K) \neq 2,3$.
\end{corollary}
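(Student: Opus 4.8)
The plan is to re-run the proof of Theorem~\ref{theorem:main}, using the sharper estimate of Theorem~\ref{theorem:dalt} in place of Lemma~\ref{lemma:thickness}. First I would put $b(K)=4$ into Theorem~\ref{theorem:dalt}: since $b(K)-3+\tfrac{1}{b(K)}=\tfrac{5}{4}$ and $2g(K)-1+b(K)=2g(K)+3$, the bound reads
\[ th(K)\ \le\ dalt(K)\ \le\ \tfrac{5}{4}\bigl(2g(K)+3\bigr). \]

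Next, suppose a knot $K$ with $b(K)=4$ and $g(K)\neq 2$ admits a purely cosmetic surgery. Exactly as in the proof of Theorem~\ref{theorem:main}, Theorem~\ref{theorem:hanselman} then forces $1\le\frac{th(K)+2g(K)}{2g(K)(g(K)-1)}$, equivalently $2g(K)(g(K)-2)\le th(K)$; combining this with the bound above gives
\[ 2g(K)\bigl(g(K)-2\bigr)\ \le\ \tfrac{5}{4}\bigl(2g(K)+3\bigr),\qquad\text{i.e.}\qquad 8\,g(K)^{2}-26\,g(K)-15\ \le\ 0. \]
The roots of $8x^{2}-26x-15$ are $x=-\tfrac12$ and $x=\tfrac{15}{4}$, so this inequality cannot hold once $g(K)\ge 4$; hence no knot of braid index $4$ with $g(K)\ge 4$ admits a purely cosmetic surgery, and only $g(K)\in\{0,1,2,3\}$ remains to be considered.

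Finally I would clear up the small-genus cases. The values $g(K)=2$ and $g(K)=3$ are exactly the exceptions in the statement: for $g(K)=2$ Theorem~\ref{theorem:hanselman} still permits the slopes $\{2,-2\}$, and for $g(K)=3$ the displayed quadratic is satisfied, so the method yields nothing. The value $g(K)=0$ is the unknot, whose braid index is $1$, not $4$. The remaining case $g(K)=1$ is the only genuine subtlety, and it lies outside the reach of Theorem~\ref{theorem:hanselman}, whose slope bound $\frac{th(K)+2g(K)}{2g(K)(g(K)-1)}$ degenerates at $g(K)=1$; there I would appeal to the classical obstructions for genus-one knots (the Casson-invariant obstruction already handles every genus-one knot with non-trivial Alexander polynomial). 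Apart from importing this classical genus-one input, I expect the argument to be entirely routine --- the quadratic estimate is a one-line computation.
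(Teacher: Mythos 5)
Your computation is exactly the paper's intended one: substituting $b(K)=4$ into Theorem~\ref{theorem:dalt} gives $th(K)\leq \frac{5}{4}(2g(K)+3)$, and feeding this into Hanselman's constraint $2g(K)(g(K)-2)\leq th(K)$ yields $8g(K)^2-26g(K)-15\leq 0$, hence $g(K)\leq \frac{15}{4}$; so every knot of braid index $4$ and genus at least $4$ is ruled out, and the paper's ``direct computation'' is nothing more than this. The one place where you go beyond the paper is also the one place where your argument is not yet closed: the genus-one case. You are right that Theorem~\ref{theorem:hanselman} as stated degenerates at $g(K)=1$, but the Casson-invariant (Boyer--Lines) obstruction you invoke only excludes genus-one knots with $\Delta_K''(1)\neq 0$, i.e.\ with non-trivial Alexander polynomial, and a genus-one knot of braid index $4$ could a priori have trivial Alexander polynomial (genus-one knots with $\Delta_K(t)=1$ exist in abundance), so ``classical obstructions'' as you describe them do not finish the job. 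The standard way to close this case is Wang's theorem that genus-one knots admit no purely cosmetic surgeries (J.~Wang, \emph{Cosmetic surgeries on genus one knots}, Algebr. Geom. Topol. 6 (2006)); the paper itself relies on this implicitly and without citation, since its own stated input likewise says nothing about $g(K)=1$. With that reference supplied, your proof is complete and coincides with the paper's.
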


\section*{Acknowledgement}
The author has been partially supported by JSPS KAKENHI Grant Number 19K03490,16H02145.


\begin{thebibliography}{1}
\bibitem{ak} T. Abe, and K. Kishimoto, 
{\em The dealternating number and the alternation number of a closed 3-braid,}
J. Knot Theory Ramifications 19 (2010), no. 9, 1157--1181.
\bibitem{bm} J. Birman, W. Menasco,   
{\em Studying links via closed braids. VI. A nonfiniteness theorem.} 
Pacific J. Math. \textbf{156} (1992), no. 2, 265-285.
\bibitem{d+} O. Dasbach, D. Futer, E. Kalfagianni, X.-S. Lin,  and N. Stoltzfus, 
{\em The Jones polynomial and graphs on surfaces.}
J. Combin. Theory Ser. B 98 (2008), no. 2, 384--399.

\bibitem{ha} J. Hanselman,
{\em Heegaard Floer homology and cosmetic surgeries in $S^3$.}
J. Eur. Math. Soc. to appear.
 
\bibitem{it} T. Ito,
{\em A quantitative Birman-Menasco finiteness theorem and its application to crossing number.}
arXiv:2010.12150v3.

\bibitem{lo} A. Lowrance, 
{\em On knot Floer width and Turaev genus.}
Algebr. Geom. Topol. 8 (2008), no. 2, 1141--1162.
 
\bibitem{ma} Y. Mathieu, 
{\em Closed 3-manifolds unchanged by Dehn surgery.}
J. Knot Theory Ramifications 1 (1992), no. 3, 279--296.

\bibitem{va} K. Varvarezos,
{\em 3-braid knots do not admit purely cosmetic surgeries,}
Acta Math. Hungar. (2021). doi.org/10.1007/s10474-020-01129-z.

\end{thebibliography}
\end{document}